\documentclass[reqno, 12pt]{article}

\pdfoutput=1

\usepackage{enumerate}
\usepackage{latexsym}
\usepackage[centertags]{amsmath}
\usepackage{amsfonts}
\usepackage{amsthm}
\usepackage{amssymb,mathtools}
\usepackage{newlfont}
\usepackage{graphics}
\usepackage{color}
\usepackage{float}
\usepackage{diagbox}
\usepackage{tocloft}
\usepackage{titlesec}
\usepackage{booktabs,longtable,array}
\usepackage{extpfeil}
\usepackage{centernot}
\usepackage[pagebackref,colorlinks=true,linkcolor=blue,citecolor=red,urlcolor=blue]{hyperref}
\usepackage[linesnumbered,ruled,vlined]{algorithm2e}
\usepackage{url}
\usepackage[T1]{fontenc}
\usepackage{lmodern}
\usepackage{microtype}
\usepackage[nameinlink,noabbrev,capitalize]{cleveref}
\usepackage{rotating}
\usepackage{multirow}
\usepackage{extarrows}
\usepackage[sort,compress,numbers]{natbib}
\usepackage[utf8]{inputenc}
\usepackage{xcolor}
\usepackage{listings}
\usepackage{aliascnt}
\numberwithin{equation}{section}

\newtheorem{theorem}{Theorem}[section]
\newtheorem{proposition}[theorem]{Proposition}
\newtheorem{lemma}[theorem]{Lemma}

\theoremstyle{definition}

\newtheorem{example}[theorem]{Example}

\newtheorem{Question}[theorem]{Question}

\allowdisplaybreaks[4]

\SetKwInput{KwInput}{Input}                
\SetKwInput{KwOutput}{Output}              

\title{Counterexamples to the Mu--Welker recursive decomposition in every degree}

\author{Feihu Liu$^{\color{blue} \dag}$, Ying Wang$^{\color{blue} \ddag}$, and Zihao Zhang$^{\color{blue} \S}$
\\[2mm]
{\small $^{\color{blue} \dag}$ Center for Combinatorics, LPMC,}\\[-0.8ex]
{\small Nankai University, Tianjin 300071, P.R.~China}\\
{\small $^{\color{blue} \ddag}$ School of Mathematics and Statistics,}\\[-0.8ex]
{\small North China University of Water Resources and Electric Power, Zhengzhou, 450045, P.R.~China}\\
{\small $^{\color{blue} \S}$ School of Mathematics and Statistics,}\\[-0.8ex]
{\small Beijing Institute of Technology, Beijing 102400, P.R.~China}\\
{\small {\color{blue} $^\dag$} Email address: liufeihu7476@163.com}\\
{\small {\color{blue} $^\ddag$} Email address: wangying2019@ncwu.edu.cn}\\
{\small {\color{blue} $^\S$} Email address: zihao-zhang@foxmail.com}\\
}

\date{\today}

\begin{document}

\maketitle

\begin{abstract}
The well-known open problem of Bell and Skandera asks whether a real-rooted polynomial $f(t)$ with positive integer coefficients and constant term one is the $f$-polynomial of a simplicial complex. Mu and Welker proved that if the recursive decomposition $f(t)=g(t)+th(t)$ satisfies the corresponding coefficient inequality $h_i<g_i$, then this open problem has an affirmative answer. Mu and Welker also conjectured that the real-rootedness of $f(t)$ implies that of $g(t)$ and $h(t)$. We give counterexamples to the conjecture of Mu and Welker for every degree at least three, and prove that the assertion holds in degrees $1$ and $2$. Moreover, each polynomial we construct is the $f$-polynomial of a simplicial complex.
\end{abstract}

\noindent
\begin{small}
\emph{2020 Mathematics subject classification}: Primary 05E45; Secondary  13F55; 05A20.
\end{small}

\noindent
\begin{small}
\emph{Keywords}: Real-rooted polynomial; Binomial expansion; Simplicial complex; $f$-polynomial.
\end{small}


\section{Introduction}

A polynomial with real coefficients is \emph{real-rooted} if all its zeros are real, with multiplicities allowed. A nonzero constant polynomial is regarded as real-rooted.

A finite \emph{simplicial complex} is a nonempty collection $\Delta$ of subsets of a finite set that is closed under taking subsets. Its elements are called \emph{faces}, and the empty set is a face. A face with $i$ vertices has
dimension $i-1$. If $f_{i-1}^{\Delta}$ denotes the number of faces with $i$ vertices, then the \emph{$f$-polynomial} of $\Delta$ is
\[f^{\Delta}(t)=\sum_{\sigma\in\Delta}t^{|\sigma|}=1+\sum_{i\ge1}f_{i-1}^{\Delta}t^i.
\]
We recommend \cite{Stanley1996} and \cite[Chapter 12]{RP.StanleyAC} for background on simplicial complexes.
For more on characterizations of $f$-polynomial of simplicial complexes, see \cite{Katona,Kruskal,Macaulay}.

Bell and Skandera \cite[Question~1.1]{BS} asked whether every real-rooted polynomial with positive integer coefficients and constant coefficient $1$ is the $f$-polynomial of a simplicial complex. Mu and Welker \cite[Section~3.2]{MW} approached this realization problem through a recursive decomposition defined by the standard integer binomial expansions of the coefficients. They also asked whether this decomposition preserves real-rootedness. We give a negative answer in every degree at least three, using polynomials that themselves have a simple simplicial realization.

For simplicial complexes, the corresponding numerical characterization is the Kruskal--Katona theorem \cite{Katona,Kruskal}. We recall the \emph{binomial expansions} that occur in this theorem and in the \emph{recursive decomposition} of Mu and Welker. For nonnegative integers $a,k$, we use the convention $\binom{a}{k}=0$ if $a<k$ and $\binom{a}{0}=1$. For a positive integer
$d$, let
\[f(t)=1+\sum_{i=1}^{d}c_it^i,\qquad c_i\in\mathbb Z_{>0}.
\]
Fix $i\in\{1,\ldots,d\}$. The coefficient $c_i$ has a unique standard $i$-th binomial expansion with integer upper indices
\cite[Section~1]{Katona}:
\[c_i=\binom{a_i}{i}+\binom{a_{i-1}}{i-1}+\cdots+\binom{a_j}{j},
 \qquad
a_i>a_{i-1}>\cdots>a_j\ge j\ge1,\qquad j\le i.
\]
The integers $j,a_j,\ldots,a_i$ depend on $c_i$. The first upper index is characterized by
\begin{equation}\label{eq:binomial-leading}
 \binom{a_i}i\le c_i<\binom{a_i+1}i.
\end{equation}
In this notation, the Kruskal--Katona theorem states that $f(t)$ is the $f$-polynomial of a simplicial complex if and only if
\[\sum_{r=j}^{i}\binom{a_r}{r-1} \leq c_{i-1}\qquad\text{for every}\qquad2\le i\le d,
\]
where the expansion on the left is taken separately for each $c_i$.

Mu and Welker \cite[Section~3.2]{MW} use these same expansions to define the coefficients of their decomposition.
Set
\begin{equation}\label{eq:recursive-coefficients}
 g_i=\sum_{r=j}^{i}\binom{a_r-1}{r}
 \quad\text{and}\quad
 h_{i-1}=\sum_{r=j}^{i}\binom{a_r-1}{r-1}.
\end{equation}
We define $g(t)$ and $h(t)$ by 
\[g(t)=1+\sum_{i=1}^{d}g_it^i,
 \qquad
 h(t)=\sum_{i=1}^{d}h_{i-1}t^{i-1}.
\]
For each term in the expansion of $c_i$, Pascal's identity gives
\[\binom{a_r-1}{r}+\binom{a_r-1}{r-1}=\binom{a_r}r.
\]
Summing over $r$ gives $c_i=g_i+h_{i-1}$ for each $i$, and hence
\[f(t)=g(t)+th(t),
\]
which is the \emph{recursive decomposition} of Mu and Welker. In particular, the two polynomials are uniquely determined by $f(t)$.
The expansion $c_1=\binom{c_1}{1}$ gives $g_1=c_1-1$ and $h_0=1$.
Every $g_i$ is nonnegative. Every term contributing to $h_{i-1}$ is positive, since $a_r-1\ge r-1$. Thus $g(t)$ has nonnegative coefficients and degree at most $d$, whereas $h(t)$ has positive coefficients and degree $d-1$; both have constant coefficient $1$.

For real-rooted $f(t)$, Mu and Welker conjectured that $h_i\le g_i$ for $1\le i\le d-1$, and proved that this conjecture would imply an affirmative answer to the Bell--Skandera question \cite[Conjecture~3.8 and Proposition~3.9]{MW}. Thus the decomposition
was introduced to obtain coefficient inequalities sufficient for simplicial realization. Motivated by calculations for Eulerian
polynomials and Stirling polynomials of the second kind, Mu and Welker posed the following open problem.

\begin{Question}(Mu--Welker; \cite[Question~3.10]{MW})\label{Question-MU-Welker}
Let $f(t) = 1 + f_0t + \cdots + f_{d-1}t^d \in \mathbb{N}[t]$ be a polynomial with only real roots and let $f(t) = g(t) + th(t)$ its recursive decomposition. Is true that $g(t)$ and $h(t)$ both have only real roots?
\end{Question}

The following theorem disproves this preservation property.

\begin{theorem}\label{thm:main}
Let $d\ge3$ and $m\ge12d$ be integers, and let
\[f_{d,m}(t)=(1+mt)^d=g_{d,m}(t)+t h_{d,m}(t)
\]
be the recursive decomposition defined by \eqref{eq:recursive-coefficients}, with $c_i=\binom{d}{i}m^i$ for
$1\le i\le d$. Then each of $g_{d,m}$ and $h_{d,m}$ has a pair of nonreal conjugate zeros.
\end{theorem}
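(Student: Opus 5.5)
The plan is to obtain explicit asymptotics for the binomial expansions of $c_i=\binom{d}{i}m^i$, rescale $t\mapsto t/m$, and compare $g_{d,m}$ and $h_{d,m}$ with explicit limiting polynomials. Let $a_i$ be the leading upper index from \eqref{eq:binomial-leading}. Since $\binom{a}{i}=a^i/i!\,(1+O(i^2/a))$, we get $a_i=\alpha_i m+O(d)$ with
\[\alpha_i=\Bigl(\tfrac{d!}{(d-i)!}\Bigr)^{1/i}.\]
The remaining terms $\binom{a_{i-1}}{i-1}+\cdots$ of the expansion are at most $\binom{a_i+1}{i}-\binom{a_i}{i}=\binom{a_i}{i-1}$, so they are $O(m^{i-1})$. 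By \eqref{eq:recursive-coefficients} they contribute $O(m^{i-1})$ to $g_i$ and only $O(m^{i-2})$ to $h_{i-1}$. Hence
\[h_{i-1}=\binom{a_i-1}{i-1}+O(m^{i-2})=\frac{i\,c_i}{\alpha_i m}\bigl(1+O(d^2/m)\bigr),\qquad g_i=c_i-h_{i-1}.\]
All implied constants would be made explicit; the hypothesis $m\ge 12d$ is what makes these error terms quantitatively small.

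For $h_{d,m}$, set $H_m(t)=h_{d,m}(t/m)$. Its coefficients converge to those of
\[H(t)=\sum_{i=1}^d \frac{i\binom{d}{i}}{\alpha_i}\,t^{i-1},\]
with relative error $O(d^2/m)$. I would show that $H$ violates a strict Newton inequality $e_k^2\ge e_{k-1}e_{k+1}\bigl(1+\tfrac1k\bigr)\bigl(1+\tfrac1{n-k}\bigr)$, where $n=d-1$, and do so with a margin larger than the perturbation. Real-rooted polynomials with positive coefficients satisfy these inequalities, so this would prove that $h_{d,m}$ has nonreal zeros. The natural first test is the top index $k=d-2$, which involves $\alpha_d=(d!)^{1/d}$, $\alpha_{d-1}=(d!)^{1/(d-1)}$ and $\alpha_{d-2}=(d!/2)^{1/(d-2)}$. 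Failing that, I would try the bottom index, which involves $\alpha_1=d$, $\alpha_2=\sqrt{d(d-1)}$ and $\alpha_3$. Verifying that the chosen inequality fails uniformly in $d\ge3$, via Stirling-type estimates on the $\alpha_i$, is the step I expect to be the main obstacle. If no single Newton inequality works uniformly, I would fall back on a discriminant computation for small $d$ together with an asymptotic argument for large $d$.

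For $g_{d,m}$, set $G_m(t)=g_{d,m}(t/m)$. The estimates above give
\[G_m(t)=(1+t)^d-\frac{1}{m}\,tH(t)+E(t),\qquad \|E\|=O(d^2/m^2),\]
with $\|E\|=O(d^2/m^2)$ holding coefficientwise. So $G_m$ is a small perturbation of a polynomial with a $d$-fold root at $-1$. Writing $s=1+t$ gives $G_m=s^d-\varepsilon P(s)+\cdots$ with $\varepsilon=1/m$ and $P(0)=-H(-1)$. Once one checks $H(-1)\ne0$, for instance through a sign or size estimate on the alternating sum of the $i\binom{d}{i}/\alpha_i$, Rouché's theorem on small disks around the $d$ points $(\varepsilon P(0))^{1/d}\zeta$, with $\zeta^d=1$, places exactly one zero of $G_m$ near each such point. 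For $d\ge3$ at least one of these points is off the real axis by order $\varepsilon^{1/d}$. This dominates the next-order corrections, which are of order $\varepsilon^{2/d}$ and come from $P(s)-P(0)$ and $E$, so that zero is nonreal. Again, $m\ge12d$ would be used to make the Rouché estimate explicit.
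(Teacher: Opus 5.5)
\textbf{Main gap: the argument for $g_{d,m}$.} It does not prove the theorem in the stated range $m\ge12d$.

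A Rouch\'e analysis around the $d$-fold zero of $(1+t)^d$ needs two things. The predicted displacement $|\varepsilon P(0)|^{1/d}$ must be small. The corrections, whose size relative to the main term is about $\varepsilon^{1/d}$ times $d$-dependent constants, must also be dominated. Under $m\ge12d$ one has $m^{-1/d}=(12d)^{-1/d}\to1$, so neither holds uniformly in $d$.

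The $d$-dependent constants make this worse, not better. Your $P(0)=-H(-1)=-d\sum_{j=0}^{d-1}(-1)^j\binom{d-1}{j}\alpha_{j+1}^{-1}$ is a $(d-1)$-st finite difference with heavy cancellation. Numerically it is about $0.20$, $-0.11$, $0.067$ for $d=3,4,5$, so there is no uniform sign and no evident lower bound. By contrast, the coefficient errors in $E$ are, even by your own estimate, only bounded by $\binom{d}{i}$ times a power of $d$ over $m^2$. They come from the integer structure of the binomial expansions, so your coefficientwise bounds control $E(-1)$ only at order $2^d/m^2$.

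So the method can at best give non-real-rootedness for $m\ge m_0(d)$ with $m_0(d)$ exponential in $d$. Even that hinges on proving $H(-1)\neq0$ for every $d\ge3$, which you leave open. No finite computation closes the window $12d\le m<m_0(d)$ for infinitely many $d$.

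\textbf{The missing idea, and a secondary issue for $h_{d,m}$.} What is needed is a global obstruction that uses only $g_1,g_2,g_3$, whose deviations from $\binom{d}{i}m^i$ are sharply controlled by the bounds on $h_1,h_2$. The paper assumes $g(t)=\prod_{j=1}^{d}(1+(m+z_j)t)$ with real $z_j$. Comparing coefficients gives $e_1(z)=-1$, $e_2(z)=(d-1)m-h_1\le0$ and $e_3(z)=(d-2)mh_1-\binom{d-1}{2}m^2-h_2<-\frac{5}{72}m^2$. Hence $\sum z_j^2<\frac98m$ while $\sum z_j^3<-\frac{5}{24}m^2$, contradicting $(\sum z_j^3)^2\le(\sum z_j^2)^3$. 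This is uniform in $d$ and never involves $H(-1)$ or the high coefficients.

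For $h_{d,m}$, your bottom-index test is exactly the paper's route: $h_1^2\ge\frac{2(d-1)}{d-2}h_2$ fails in the limit because $(d-1)^2>d(d-2)$. However, the limiting margin $\frac{d-1}{d-2}\alpha_3^2-\alpha_2^2$ is only $O(1)$ (it tends to $\frac13$), against $\alpha_2^2=d(d-1)$. Relative errors $O(d^2/m)$, as you state them, would therefore force $m\gg d^4$. You need the sharp additive bounds $h_1<\alpha_2m+\frac12$ and $h_2>\binom{\alpha_3m-1}{2}$. These give $\frac{2(d-1)}{d-2}h_2-h_1^2>\frac{d}{3(d-1)}m^2-\bigl(4d+\frac32\bigr)m$, and this is exactly where the hypothesis $m\ge12d$ comes from.
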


For each $d\ge3$, the real-rooted polynomials $(1+mt)^d$, with integers
$m\ge12d$, therefore form an infinite family of counterexamples to \cref{Question-MU-Welker}.
\cref{prop:low-degrees} shows that the decomposition does preserve real-rootedness in degrees one and two. Hence degree three is the smallest possible degree of a counterexample.

The conditions in \cref{thm:main} already satisfy the conclusion sought by Bell and Skandera. 
Indeed, partition $dm$ vertices into $d$ sets of size $m$, and take as faces the subsets meeting each set in at most one vertex. The resulting complex has $f$-polynomial $(1+mt)^d$, as verified in \cref{Section-666}.
Thus the failure persists when the input is required to be an $f$-polynomial. 

Moreover, the identity
\[(1+mt)^d =(1+(m-1)t)(1+mt)^{d-1}+t(1+mt)^{d-1}\]
gives another decomposition of the same form in which both component polynomials are real-rooted. The obstruction therefore concerns the prescribed binomial rule. A recursive argument using this rule cannot assume real-rootedness of its successive components. The coefficient comparisons in \cite[Conjecture~3.8]{MW} are a separate issue and are not contradicted by our result.

The paper is organized as follows.
\cref{Section-Two} states two required inequalities.
In \cref{Section-Three}, we give the inequalities satisfied by the coefficients of the polynomials in \cref{thm:main}.
\cref{sec:h} is devoted to the proof, contained in \cref{thm:main}, that $h_{d,m}(t)$ is not real-rooted.
In \cref{Section=five}, we discuss the proof of the non-real-rootedness of $g_{d,m}(t)$.
In \cref{Section-666}, we proved that the polynomial $(1+mt)^d$ is the $f$-polynomial of a simplicial complex of dimension $d-1$.

\section{Two elementary inequalities}\label{Section-Two}

For the classical inequalities and their role in combinatorics, see \cite{HLP,Stanley}.
The first inequality below is the first nontrivial Newton inequality
for a polynomial with nonnegative coefficients; see, for example,
\cite{HLP}. Only this special case is needed, and its proof is included.

\begin{lemma}\label{lem:newton}
Let $P(t)=1+b_1t+b_2t^2+\cdots+b_nt^n$ have nonnegative real coefficients, with $n\ge2$ and $b_n>0$.
If $P$ is real-rooted, then
\[ b_1^2\ge\frac{2n}{n-1}b_2.
\]
\end{lemma}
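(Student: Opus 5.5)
Since $b_n>0$ and $P(0)=1$, the polynomial $P$ has exactly $n$ zeros counted with multiplicity, none equal to $0$. Real-rootedness together with nonnegative coefficients forces every zero to be a negative real number. So we plan to factor
\[P(t)=\prod_{k=1}^{n}(1+r_kt),\qquad r_k=-1/\rho_k>0,\]
where $\rho_1,\dots,\rho_n$ are the zeros. Comparing coefficients gives $b_1=e_1(r)=\sum_k r_k$ and $b_2=e_2(r)=\sum_{k<l}r_kr_l$. These are the first two elementary symmetric functions of the positive reals $r_1,\dots,r_n$.

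The inequality then becomes a statement about these symmetric functions. We would use the identity
\[e_1^2=\sum_k r_k^2+2e_2.\]
By Cauchy--Schwarz (or the quadratic-mean/arithmetic-mean inequality), $\sum_k r_k^2\ge e_1^2/n$. Substituting gives $e_1^2\ge e_1^2/n+2e_2$, that is, $\frac{n-1}{n}e_1^2\ge 2e_2$. Since $n\ge2$, we may divide by $\frac{n-1}{n}>0$, which is exactly $b_1^2\ge\frac{2n}{n-1}b_2$.

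An equivalent route: $\sum_{k<l}(r_k-r_l)^2=(n-1)\sum_k r_k^2-2e_2\ge0$. Combined with the identity above, this yields the same conclusion directly. Positivity of the $r_k$ is not even needed at this step; only their realness matters.

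The only point requiring care is the first step. We must justify that $\deg P=n$, which holds because $b_n>0$. We must also justify that the factorization with real $r_k$ is legitimate, which follows from real-rootedness together with $P(0)=1\neq0$. No step is a genuine obstacle; the rest is the Cauchy--Schwarz computation.
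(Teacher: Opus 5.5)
Your proposal is correct and follows essentially the same approach as the paper: factor $P(t)=\prod_{k}(1+r_kt)$ with $r_k>0$, use $b_1^2=\sum_k r_k^2+2b_2$, and bound $\sum_k r_k^2\ge b_1^2/n$ via the sum-of-squared-differences identity (your Cauchy--Schwarz step). Your observation that only the realness of the $r_k$ matters at this step is also accurate.
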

\begin{proof}
For $t\ge0$ we have $P(t)\ge1$, so all real zeros of $P(t)$ are negative. By factoring $P(t)$ and using its constant coefficient, we may write
\[P(t)=\prod_{j=1}^{n}(1+\lambda_jt),
 \qquad \lambda_j>0.
\]
Comparison of coefficients gives
\[b_1=\sum_{j=1}^{n}\lambda_j,
 \qquad
 b_2=\sum_{1\le i<j\le n}\lambda_i\lambda_j.
\]
Thus
\[b_1^2=\sum_{j=1}^{n}\lambda_j^2+2b_2.
\]
On the other hand, we have 
\[n\sum_{j=1}^{n}\lambda_j^2-b_1^2=n\sum_{j=1}^{n}\lambda_j^2-\left(\sum_{j=1}^{n}\lambda_j\right)^2=\sum_{1\le i<j\le n}(\lambda_i-\lambda_j)^2\ge0.
\]
Hence $\sum_{j=1}^n\lambda_j^2\ge b_1^2/n$. Substituting this into the
preceding identity gives $b_1^2\ge b_1^2/n+2b_2$. Multiplication by
$n/(n-1)$ yields the assertion.
\end{proof}

\begin{lemma}\label{lem:moments}
For real numbers $z_1,\ldots,z_n$, one has
\[
 \left(\sum_{j=1}^{n}z_j^3\right)^2
 \le\left(\sum_{j=1}^{n}z_j^2\right)^3.
\]
\end{lemma}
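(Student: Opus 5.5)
We need to prove $(\sum z_j^3)^2\le(\sum z_j^2)^3$ for real numbers $z_1,\ldots,z_n$. The plan is to reduce to the monotonicity of $\ell^p$ norms, namely $\|z\|_3\le\|z\|_2$. We derive this directly so that no external result is needed.

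First dispose of the trivial case: if $S:=\sum_j z_j^2=0$, then every $z_j=0$ and both sides vanish. Otherwise, since both sides are homogeneous of degree $6$ in $(z_1,\ldots,z_n)$, we may rescale by $S^{-1/2}$ and assume $S=1$. It then suffices to show $\left|\sum_j z_j^3\right|\le 1$.

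Under the normalization $\sum_j z_j^2=1$, each $|z_j|\le 1$. Hence $|z_j|^3\le z_j^2$ for every $j$. By the triangle inequality,
\[
\Bigl|\sum_{j=1}^n z_j^3\Bigr|\le\sum_{j=1}^n |z_j|^3\le\sum_{j=1}^n z_j^2=1 .
\]
Squaring gives the claim in the normalized case, and undoing the scaling gives the general inequality.

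Alternatively, one can avoid normalization. Let $M=\max_j|z_j|$. Then
\[
\Bigl|\sum_j z_j^3\Bigr|\le M\sum_j z_j^2 \qquad\text{and}\qquad M^2\le\sum_j z_j^2 .
\]
Squaring the first bound and inserting the second gives the result in one line. This version is probably the cleanest to write.

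There is no real obstacle here. The only points needing care are the sign issue, handled by bounding $\sum_j z_j^3$ in absolute value before squaring, and the degenerate case where all $z_j=0$.
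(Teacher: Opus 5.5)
Your proof is correct and is essentially the paper's argument: the paper bounds each $|z_j|$ by $\bigl(\sum_i z_i^2\bigr)^{1/2}$, deduces $\bigl|\sum_j z_j^3\bigr|\le\bigl(\sum_i z_i^2\bigr)^{1/2}\sum_j z_j^2$, and squares, which is exactly your unnormalized version with $M=\max_j|z_j|$. Your normalized version is the same estimate after rescaling to $\sum_j z_j^2=1$.
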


\begin{proof}
Since $z_j^2\le\sum_{i=1}^n z_i^2$ for each $j$, i.e., $z_j\le(\sum_{i=1}^n z_i^2)^{\frac{1}{2}}$, we have
\[\left|\sum_{j=1}^{n}z_j^3\right|\le\sum_{j=1}^{n}|z_j|^3\le\left(\sum_{i=1}^{n}z_i^2\right)^{1/2}\sum_{j=1}^{n}z_j^2.
\]
Both sides are nonnegative, so squaring proves the result. The same argument also covers the case in which all $z_j$ vanish.
\end{proof}

\section{The quadratic and cubic coefficients}\label{Section-Three}

Fix integers $d\ge3$ and $m\ge1$. In this section and the next two, we omit the subscripts $d,m$ from $f_{d,m}(t)$, $g_{d,m}(t)$, and $h_{d,m}(t)$. Since $f(t)=g(t)+th(t)$ and $h_0=1$, the first three nonconstant coefficients
of $g(t)$ satisfy
\begin{equation}\label{eq:first-coefficients}
 \begin{gathered}
 g_1=dm-1,\qquad
 g_2=\binom{d}{2}m^2-h_1,\qquad
 g_3=\binom{d}{3}m^3-h_2.
 \end{gathered}
\end{equation}
The estimates needed below involve only $h_1$ and $h_2$. For real $x$, we use the polynomial notation $\binom{x}{2}=x(x-1)/2$.

\begin{lemma}\label{lem:coefficient-bounds}
For all integers $d\ge3$ and $m\ge1$, one has
\begin{equation}\label{eq:h1-bounds}
 (d-1)m\le h_1<m\sqrt{d(d-1)}+\frac12
\end{equation}
and
\begin{equation}\label{eq:h2-bound}
h_2>
\binom{\left(6\binom{d}{3}\right)^{1/3}m-1}{2}.
 \end{equation}
\end{lemma}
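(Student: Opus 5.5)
The plan is to read off $h_1$ and $h_2$ directly from the standard binomial expansions of $c_2=\binom d2m^2$ and $c_3=\binom d3m^3$, using \eqref{eq:recursive-coefficients} together with the characterization \eqref{eq:binomial-leading} of the leading upper index. Throughout, put $D=d(d-1)m^2$, so that $c_2=D/2$ and $D$ is a positive integer.

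\emph{Bounds on $h_1$.} The expansion of $c_2$ is either $c_2=\binom{a_2}{2}$ (the case $j=2$) or $c_2=\binom{a_2}{2}+\binom{a_1}{1}$ with $a_2>a_1\ge1$.

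\begin{itemize}
\item \emph{Formula for $h_1$.} By \eqref{eq:recursive-coefficients}, $h_1=\binom{a_2-1}{1}+\binom{a_1-1}{0}$ in the second case. Hence $h_1=a_2-1$ in the first case and $h_1=a_2$ in the second.
\item \emph{Lower bound.} A short computation gives $\binom{(d-1)m+1}{2}\le \binom d2 m^2$, which is equivalent to $(d-1)m+1\le dm$, i.e.\ $m\ge1$. Since $\binom{a_2+1}{2}>c_2$ and $\binom{\cdot}{2}$ is increasing on nonnegative integers, this gives $a_2\ge (d-1)m+1$. Therefore $h_1\ge a_2-1\ge (d-1)m$ in either case.
\item \emph{Upper bound, first case.} Here $a_2(a_2-1)=D$, so $a_2-\tfrac12=\sqrt{D+\tfrac14}<\sqrt D+\tfrac12$. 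Thus $h_1=a_2-1<\sqrt D<\sqrt D+\tfrac12$.
\item \emph{Upper bound, second case.} Here $\binom{a_2}{2}<c_2$ strictly. Since $a_2(a_2-1)$ and $D$ are integers, $a_2(a_2-1)\le D-1$. Hence $(a_2-\tfrac12)^2\le D-\tfrac34<D$, which gives $h_1=a_2<\sqrt D+\tfrac12$.
\end{itemize}
Since $\sqrt D=m\sqrt{d(d-1)}$, this is exactly \eqref{eq:h1-bounds}.

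\emph{Lower bound on $h_2$.} Set $x=(6\binom d3)^{1/3}m$, so that $c_3=x^3/6$.

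\begin{itemize}
\item \emph{Reduction to the leading term.} Every term of $h_2=\sum_{r=j}^{3}\binom{a_r-1}{r-1}$ is nonnegative, so $h_2\ge\binom{a_3-1}{2}$.
\item \emph{Comparing $a_3$ with $x$.} By \eqref{eq:binomial-leading},
\[\frac{x^3}{6}=c_3<\binom{a_3+1}{3}=\frac{(a_3+1)a_3(a_3-1)}{6}=\frac{a_3^3-a_3}{6}<\frac{a_3^3}{6}.\]
Hence $a_3>x$.
\item \emph{Monotonicity.} The polynomial $\binom{y}{2}=y(y-1)/2$ is strictly increasing for $y\ge\tfrac12$. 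We have $x\ge 6^{1/3}>1.8$, so $x-1>\tfrac12$, and also $a_3-1>x-1$. Therefore $\binom{a_3-1}{2}>\binom{x-1}{2}$, which gives \eqref{eq:h2-bound}.
\end{itemize}

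The main obstacle is the strict upper bound $h_1<\sqrt D+\tfrac12$ in the case where the $\binom{a_1}{1}$ term is present. The naive estimate only yields $a_2<\tfrac12+\sqrt{D+\tfrac14}$, which is not quite enough. The fix is to use integrality: $a_2(a_2-1)$ and $D$ are integers and $a_2(a_2-1)<D$, so in fact $a_2(a_2-1)\le D-1$, which pushes the bound below $\sqrt D+\tfrac12$. The remaining steps are routine monotonicity arguments.
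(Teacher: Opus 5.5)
Your proof is correct and follows essentially the same route as the paper. It uses the same case split on whether the quadratic expansion has a linear tail, the same comparison $\binom{(d-1)m+1}{2}\le\binom{d}{2}m^2$ for the lower bound, integrality to get the strict upper bound, and the leading-term estimate $c_3<\binom{a_3+1}{3}<a_3^3/6$ with monotonicity for $h_2$. The only cosmetic difference is that the paper unifies both cases of the upper bound via $h_1(h_1-1)\le d(d-1)m^2-2$, whereas you treat them separately, obtaining the slightly stronger $h_1<m\sqrt{d(d-1)}$ when the expansion is exact.
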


\begin{proof}
Recall that $c_2 = \binom{d}{2} m^2$. Let $a$ be the unique largest integer satisfying
\[
\binom{a}{2} \le \binom{d}{2} m^2 < \binom{a+1}{2},
\]
and define the remainder $b = \binom{d}{2} m^2 - \binom{a}{2}$.
Subtracting $\binom{a}{2}$ throughout gives
$
0 \le b < \binom{a+1}{2} - \binom{a}{2} = a.
$

In the standard binomial expansion $c_2 = \binom{a}{2} + b$, the  definition of the coefficients of $h(t)$ (see \eqref{eq:recursive-coefficients}) gives $h_1 = a-1$ if $b=0$, and $h_1 = (a-1)+1 = a$ if $b>0$. Thus $h_1 \ge a-1$ holds in both cases.

We first establish the lower bound $a \ge (d-1)m + 1$. Direct computation shows
\[
\binom{d}{2} m^2 - \binom{(d-1)m + 1}{2} = \frac{(d-1)m(m-1)}{2} \ge 0,
\]
so $\binom{(d-1)m + 1}{2} \le \binom{d}{2} m^2$. By the maximality of $a$, we conclude $a \ge (d-1)m + 1$. Combining this with the bound above gives
\[
h_1 \ge a - 1 \ge (d-1)m.
\]

For the upper bound, if $b>0$ then
\[
 h_1(h_1-1)=a(a-1)=d(d-1)m^2-2b
 \le d(d-1)m^2-2.
\]
If $b=0$, then $h_1=a-1$ and
\[
h_1(h_1-1) = (a-1)(a-2) \leq a(a-1)-2 = 
 d(d-1)m^2-2.
\]
The inequality holds because $a\ge d\ge3$.
Thus, in either case, we have 
\[\left(h_1-\frac12\right)^2=h_1(h_1-1)+\frac14\le d(d-1)m^2-\frac74 <d(d-1)m^2.
\]
Since $h_1\ge(d-1)m\ge2$, taking positive square roots gives the upper bound in \eqref{eq:h1-bounds}.

For the  coefficient $c_3$, let $A$ be the largest integer such that
\[
\binom{A}{3}\le \binom{d}{3}m^3<\binom{A+1}{3}<\frac{A^3}{6},
\]
and hence
\[
A>\left(6\binom{d}{3}\right)^{1/3}m.
\]
The leading term $\binom{A}{3}$ in the cubic binomial expansion contributes
$\binom{A-1}{2}$ to $h_2$, while all remaining contributions are
nonnegative. Therefore,
\[h_2\ge \binom{A-1}{2}.
\]
Moreover, $\left(6\binom{d}{3}\right)^{1/3} m\ge\sqrt[3]{6}>\frac{3}{2}$. By the strict monotonicity
of $x\mapsto\binom{x-1}{2}$ on $(\frac{3}{2},\infty)$, it follows that
\[h_2\ge\binom{A-1}{2}>\binom{\left(6\binom{d}{3}\right)^{1/3} m-1}{2},
\]
which proves \eqref{eq:h2-bound}.
\end{proof}

\begin{lemma}\label{lem:g-degree}
If $d\ge3$ and $m\ge2$, then $\deg g(t)=d$.
\end{lemma}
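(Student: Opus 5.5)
We need $g_d\ne 0$, where $c_d=m^d$ and $g_d=\sum_{r=j}^{d}\binom{a_r-1}{r}$. Since the summands are nonnegative, it suffices to show that the leading term $\binom{a_d-1}{d}$ is positive. That means proving $a_d-1\ge d$, i.e.\ $a_d\ge d+1$. By \eqref{eq:binomial-leading}, $a_d$ is the largest integer with $\binom{a_d}{d}\le c_d=m^d$. So it is enough to exhibit the integer $d+1$ as a candidate, that is, to check $\binom{d+1}{d}=d+1\le m^d$. Maximality of $a_d$ then gives $a_d\ge d+1$.

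The inequality $d+1\le m^d$ is elementary for $m\ge2$ and $d\ge3$: $m^d\ge 2^d\ge d+1$, and $2^d\ge d+1$ holds for all $d\ge0$ by Bernoulli's inequality or induction. Consequently $g_d\ge\binom{a_d-1}{d}\ge\binom{d}{d}=1>0$.

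Finally, $g(t)$ has degree at most $d$ by construction, since its coefficients are only defined up to index $d$. With $g_d>0$, this gives $\deg g(t)=d$.

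There is no real obstacle. The only point needing care is the role of the hypothesis $m\ge2$: for $m=1$ we get $c_d=1=\binom{d}{d}$, so $a_d=d$ and $g_d=\binom{d-1}{d}=0$. The condition $m\ge2$ is exactly what forces $a_d>d$.
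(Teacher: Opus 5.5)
Your proposal is correct and follows essentially the same route as the paper's proof. Both show $m^d\ge 2^d\ge d+1=\binom{d+1}{d}$, conclude $a_d\ge d+1$ from \eqref{eq:binomial-leading}, and then use $g_d\ge\binom{a_d-1}{d}\ge\binom{d}{d}=1$.
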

\begin{proof}
By the definition of $g(t)$, it suffices to show $g_d>0$. The inequality $2^d\ge d+1$ holds for $d=1$ and is preserved when $d$
is increased by one: $2^{d+1}\ge2(d+1)\ge d+2$. Hence $m^d\ge2^d\ge d+1=\binom{d+1}{d}$.
Let $a_d$ be the first upper index in the standard $d$-th binomial
expansion of $m^d$. Then $a_d\ge d+1$ by
\eqref{eq:binomial-leading}, and hence
\[g_d\ge\binom{a_d-1}{d}\ge\binom{d}{d}=1.
\]
This proves the degree assertion.
\end{proof}

\section{Failure of real-rootedness for $h(t)$}\label{sec:h}

\begin{proof}[Proof of the assertion for $h(t)$ in \cref{thm:main}]
Let $d\ge3$ and $m\ge12d$, and write $h(t)=h_{d,m}(t)$.
Since $h(t)$ has degree $d-1$, positive coefficients, and constant
term $1$, \cref{lem:newton} shows that it suffices to prove
\[ \Delta:=\frac{2(d-1)}{d-2}h_2-h_1^2>0.
\]
Set
\[
 s=\sqrt{d(d-1)},\qquad r=\Big(d(d-1)(d-2)\Big)^{1/3}.
\]
By \cref{lem:coefficient-bounds}, we have 
\[ h_1<sm+\frac12,
 \qquad h_2>\binom{rm-1}{2}.
\]
Since $h_1>0$, these estimates give
\[\Delta>\frac{2(d-1)}{d-2}\binom{rm-1}{2}-\left(sm+\frac12\right)^2=M_2m^2-M_1m+M_0,
\]
where
\[
 M_2=\frac{d-1}{d-2}r^2-s^2,\qquad
 M_1=\frac{3(d-1)}{d-2}r+s,\qquad
 M_0=\frac{2(d-1)}{d-2}-\frac14>0.
\]
Since $M_0>0$, it remains to show that $M_2m>M_1$.
We therefore bound $M_2$ from below and $M_1$ from above.

For $M_2$, set $X=\frac{d-1}{d-2}r^2$ and $Y=s^2$.
Then
\[
 X^3-Y^3=\frac{d^2(d-1)^3}{d-2}>0,
\]
so $X>Y>0$. Hence
\[ M_2=\frac{X^3-Y^3}{X^2+XY+Y^2}>\frac{X^3-Y^3}{3X^2}=\frac{d}{3r}>\frac{d}{3(d-1)},
\]
where the last inequality follows from $r^3=(d-1)^3-(d-1)<(d-1)^3$.

For $M_1$, observe that
\[\frac{d-1}{d-2}r=d\left(1+\frac{1}{d(d-2)}\right)^{2/3}\le d+\frac{2}{3(d-2)},
\]
where the inequality follows from the concavity bound
$(1+x)^{2/3}\le1+\frac23x$ for $x\ge0$.
Since 
\[ \left(d-\frac12\right)^2-s^2=\frac14>0,
\]
we have $s<d-\frac12$. By combining these estimates and using $d\ge3$, we obtain
\[M_1<4d+\frac{2}{d-2}-\frac12\le4d+\frac32.
\]

Finally, since $m\ge12d$, we have 
\[ M_2m>\frac{dm}{3(d-1)}\ge\frac{4d^2}{d-1}=4d+\frac{4d}{d-1}>4d+\frac32>M_1.
\]
Therefore, we have 
\[\Delta>m(M_2m-M_1)+M_0>0.
\]
This contradicts the necessary inequality in \cref{lem:newton} if $h(t)$ is real-rooted. Thus $h_{d,m}(t)$ has a nonreal zero.
\end{proof}

\section{Failure of real-rootedness for $g(t)$}\label{Section=five}

Recall the following two notations used in \cref{sec:h}:
\[s=\sqrt{d(d-1)},\qquad r=\Big(d(d-1)(d-2)\Big)^{1/3}.
\]

\begin{lemma}\label{lem:radical-gap}
For every integer $d\ge3$, we have 
\[(d-2)s-\binom{d-1}{2}-\frac{r^2}{2}<-\frac{d-1}{4d}.
\]
\end{lemma}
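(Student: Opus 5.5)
The plan is to replace both radicals by explicit rational bounds in the correct directions. We need an upper bound for $s$, because it enters with the positive factor $d-2$, and a lower bound for $r^2$, because it enters with a minus sign. The claim should then reduce to an elementary rational inequality in $d$.

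A heuristic expansion shows how much room there is:
\[
(d-2)s=d^2-\tfrac52d+\tfrac78+O(1/d),\qquad \binom{d-1}{2}+\tfrac{r^2}{2}=d^2-\tfrac52d+\tfrac76+O(1/d).
\]
So the left-hand side is about $-\tfrac{7}{24}$, while the right-hand side is about $-\tfrac14$. The margin is only $\tfrac1{24}$, so the bounds must be accurate to order $1/d$.

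\emph{Bound on $s$.} Check directly that
\[
\left(d-\tfrac12-\tfrac1{8d}\right)^2=d^2-d+\tfrac1{8d}+\tfrac1{64d^2}>d(d-1)=s^2 .
\]
Hence $s<d-\tfrac12-\tfrac1{8d}$, and therefore
\[
(d-2)s<d^2-\tfrac52 d+\tfrac78+\tfrac1{4d}.
\]

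\emph{Bound on $r$.} Write $r^3=(d-1)^3-(d-1)$ and put $u=d-1\ge2$. Try the candidate lower bound $r\ge u-\tfrac1{3u}-\tfrac{c}{u^3}$ with a small explicit constant $c$, say $c=\tfrac19$. Expanding the cube gives
\[
\left(u-\tfrac1{3u}-\tfrac{c}{u^3}\right)^3=u^3-u+\tfrac{1}{3u}-\tfrac{3c}{u}+O(u^{-3}),
\]
so any $c>\tfrac19$ leaves a negative term of order $1/u$. Choosing, for example, $c=\tfrac15$ should give the cube at most $u^3-u$ for all $u\ge2$. Squaring then yields $r^2\ge u^2-\tfrac23-O(u^{-2})$, which is $d^2-2d+\tfrac13-O(1/d^2)$.

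\emph{Assembly.} Substitute both bounds into the left-hand side. The terms $d^2$ and $d$ cancel, leaving roughly
\[
-\tfrac7{24}+\tfrac1{4d}+O(1/d^2).
\]
This must be shown to be less than $-\tfrac{d-1}{4d}=-\tfrac14+\tfrac1{4d}$. Conveniently the two $\tfrac1{4d}$ terms cancel, so the requirement is that the $O(1/d^2)$ error stay below $\tfrac1{24}$. After clearing denominators this becomes a polynomial inequality in $d$, valid for $d\ge d_0$ with a modest $d_0$. The finitely many remaining cases $3\le d<d_0$ will be checked numerically; for instance, at $d=3$ we have $s=\sqrt6$ and $r=6^{1/3}$.

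The main obstacle is the lower bound for the cube root. Its error term must be controlled explicitly and uniformly enough that the cubic polynomial inequality for the chosen $c$ holds for all $u\ge2$, which should be verifiable by expanding and collecting terms. The remaining difficulty is keeping the final margin, about $\tfrac1{24}$, from being eaten by lower-order terms at small $d$. If it is, I would tighten the bound on $s$ by adding the next term $-\tfrac1{16d^2}$, and enlarge the range of directly checked values of $d$.
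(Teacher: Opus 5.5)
Your plan is correct, and your bound on $s$ is exactly the paper's (it writes it as $d-\frac12-s=\frac{1}{4(d-\frac12+s)}>\frac1{8d}$). The routes differ only in how $r$ is handled.

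\textbf{The paper's route.} It does not expand the cube root. It reuses the estimate $M_2=\frac{d-1}{d-2}r^2-s^2>\frac{d}{3r}>\frac{d}{3(d-1)}$ from the proof for $h(t)$ (the $X^3-Y^3$ factorization). This rearranges to $r^2-d(d-2)>\frac13\bigl(1-\frac1{(d-1)^2}\bigr)\ge\frac14$. The proof then finishes in one line via
\[
(d-2)s-\binom{d-1}{2}-\frac{r^2}{2}=(d-2)\Bigl(s-d+\frac12\Bigr)-\frac12\bigl(r^2-d(d-2)\bigr)<-\frac{d-2}{8d}-\frac18=-\frac{d-1}{4d}.
\]
In your terms, the paper spends the entire asymptotic margin $\frac1{24}$ by using $\frac14$ where the true constant is $\frac13$. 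In return it gets a crude bound valid uniformly for $d\ge3$, with no bookkeeping. The lemma's right-hand side is exactly $-\frac{d-2}{8d}-\frac18$.

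\textbf{Your route.} You keep that margin and pay for it with a sharper, self-contained cube-root estimate that does not depend on Section 4. It goes through more cleanly than you anticipate.
\begin{itemize}
\item Take $b=\frac1{3u}+\frac1{5u^3}$. The inequality $(u-b)^3\le u^3-u$ reduces to $\frac{4}{15u}+b^3\ge\frac{2}{5u^3}+\frac{3}{25u^5}$, which holds for $u\ge2$.
\item When squaring, keep $b^2\ge\frac1{9u^2}$. This gives $r^2\ge u^2-\frac23-\frac{13}{45u^2}$.
\item Hence the left-hand side is below $-\frac7{24}+\frac1{4d}+\frac{13}{90(d-1)^2}$.
\item Since $\frac{13}{90(d-1)^2}\le\frac{13}{360}<\frac1{24}$ already at $d=3$, you need no numerical case checks.
\end{itemize}
If you drop $b^2$ entirely, the error $\frac1{5u^2}$ exceeds $\frac1{24}$ at $d=3$, and you would need your fallback check there.

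Your first suggestion $c=\frac19$ would fail, because the next-order term $\frac{5}{27u^3}$ is positive. You correctly moved to $c>\frac19$.
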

\begin{proof}
By the lower bound established in \cref{sec:h}, we obtain 
\[
 r^2-d(d-2)
 >\frac{d(d-2)}{3(d-1)^2}
 =\frac13\left(1-\frac1{(d-1)^2}\right)
 \ge\frac14.
\]
The identity $(d-1/2)^2-s^2=1/4$ also gives
\[
 d-\frac12-s
 =\frac{1}{4(d-\frac12+s)}
 >\frac1{8d}.
\]
Therefore, we have 
\[\begin{aligned}
 (d-2)s-\binom{d-1}{2}-\frac{r^2}{2}
 &=(d-2)\left(s-d+\frac12\right)
   -\frac12\bigl(r^2-d(d-2)\bigr)\\
 &<-\frac{d-2}{8d}-\frac18
 =-\frac{d-1}{4d}.
 \end{aligned}
\]
This completes the proof.
\end{proof}

\begin{proof}[Proof of the assertion for $g(t)$ in \cref{thm:main}]
Let $d\ge3$ and $m\ge12d$ be integers, and suppose, to the contrary, that $g(t)$ is real-rooted.
By \cref{lem:g-degree}, its degree is $d$. Its nonnegative coefficients and constant coefficient $1$ force all its zeros to be negative, so we may write
\[g(t)=\prod_{j=1}^{d}(1+(m+z_j)t),\qquad z_j\in\mathbb R,\quad z_j>-m.
\]
The shift by $m$ cancels the leading terms in the coefficient comparisons below.
Set
\[
 e_2=\sum_{1\le i<j\le d}z_iz_j \quad \text{and}\quad 
 e_3=\sum_{1\le i<j<k\le d}z_iz_jz_k.
\]
By comparing the coefficients of $t,t^2,t^3$ with \eqref{eq:first-coefficients}, we have
\begin{align*}
e_1=\sum_{j=1}^d z_j=-1,\quad e_2=(d-1)m-h_1\le0,\quad e_3=(d-2)mh_1-\binom{d-1}{2}m^2-h_2.
\end{align*}
The inequality for $e_2$ follows from \eqref{eq:h1-bounds}.

According to $(\sum_{j=1}^d z_j)^2=1$ and \eqref{eq:h1-bounds}, we obtain 
\[\sum_{j=1}^d z_j^2=1-2e_2=1+2h_1-2(d-1)m <m+2\le\frac98m,
\]
where we used $s=\sqrt{d(d-1)}<d-1/2$ and $m\ge12d\ge36$.

For the third power sum, we first estimate $e_3$ using \cref{lem:coefficient-bounds}:
\[
 \begin{aligned}
 e_3
 &<\left((d-2)s-\binom{d-1}{2}-\frac{r^2}{2}\right)m^2
   +\left(\frac{d-2}{2}+\frac32r\right)m-1\\
 &<-\frac{d-1}{4d}m^2+\left(2d-\frac52\right)m-1\\
 &\le-\frac{2d-1}{24d}m^2-1
 <-\frac5{72}m^2.
 \end{aligned}
\]
The second inequality uses \cref{lem:radical-gap} and $r<d-1$;
the third uses $m\ge12d$. The last follows from
$(2d-1)/(24d)\ge5/72$ for $d\ge3$.

According to $(\sum_{j=1}^d z_j)^3=-1$, we have 
\[\sum_{j=1}^d z_j^3=-1+3e_2+3e_3 <-\frac5{24}m^2,
\]
since $e_2\le0$. This power sum is negative, so squaring its bound and
using $m\ge36$ yields
\[\begin{aligned}
\left(\sum_{j=1}^d z_j^3\right)^2>\frac{25}{576}m^4\ge\frac{25}{16}m^3>\frac{729}{512}m^3>\left(\sum_{j=1}^d z_j^2\right)^3.
\end{aligned}
\]
This contradicts \cref{lem:moments}, proving the assertion for $g(t)$.
The assertion for $h(t)$ was proved in \cref{sec:h}; both polynomials have real coefficients, so their nonreal zeros occur in conjugate pairs.
\end{proof}

\section{The underlying simplicial complexes}\label{Section-666}

\begin{proposition}\label{prop:low-degrees}
Let $f(t)$ be a real-rooted polynomial of degree one or two with positive integer coefficients and constant coefficient $1$. Both polynomials in its recursive decomposition are real-rooted.
\end{proposition}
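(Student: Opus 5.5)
Degree one: $f(t)=1+c_1t$, the expansion $c_1=\binom{c_1}{1}$ gives $g(t)=1+(c_1-1)t$ and $h(t)=1$. Both are real-rooted: $g$ is constant (when $c_1=1$) or linear, and $h$ is a nonzero constant. So the substance is degree two.

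Degree two: write $f(t)=1+c_1t+c_2t^2$. Then $h(t)=1+h_1t$ is linear with $h_1>0$, hence automatically real-rooted, and everything reduces to
\[g(t)=1+(c_1-1)t+g_2t^2 .\]
If $g_2=0$, then $g$ has degree at most one and we are done. Otherwise $g$ is real-rooted exactly when $(c_1-1)^2\ge 4g_2$. Real-rootedness of $f$ gives $c_1^2\ge 4c_2$ (this is \cref{lem:newton} with $n=2$, or simply the discriminant).

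Next I would make $g_2$ explicit. Take the standard expansion $c_2=\binom{a}{2}+b$ with $0\le b<a$, as in the proof of \cref{lem:coefficient-bounds}. If $b=0$, then $g_2=\binom{a-1}{2}$. If $b>0$, the term $\binom{b}{1}$ contributes $\binom{b-1}{1}=b-1$, so $g_2=\binom{a-1}{2}+b-1$. In both cases
\[g_2=c_2-(a-1)-[b>0]\le c_2-(a-1)-[b>0],\]
that is, $g_2=c_2-h_1$ with $h_1\ge a-1$.

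The target inequality $(c_1-1)^2\ge4g_2$ becomes
\[c_1^2-2c_1+1\ge 4c_2-4h_1 .\]
Using $c_1^2\ge4c_2$, it suffices to show $4h_1+1\ge 2c_1$, i.e. $c_1\le 2h_1+\tfrac12$, and since both sides are integers, $c_1\le 2h_1$. This sufficient condition need not always hold, so the plan is a case split. When $c_1\le 2h_1$ we are done. When $c_1>2h_1$, we have $c_1\ge 2a-1$ (if $b=0$) or $c_1\ge 2a+1$ (if $b>0$). Then I would exploit the slack in $c_1^2-4c_2$ directly, since $c_2<\binom{a+1}{2}$ means $4c_2<2a(a+1)$. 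Concretely, I would show $(c_1-1)^2-4c_2+4h_1\ge0$ using $(c_1-1)^2\ge(2a-2)^2$ together with $4c_2\le 2a(a-1)+4b$ and $h_1=a-1+[b>0]$, $b\le a-1$. For example, in the case $b=0$:
\[(2a-2)^2-2a(a-1)+4(a-1)=2(a-1)(a-2+2)\ge0 .\]
The case $b>0$ is handled similarly, with $c_1\ge2a+1$ giving $(2a)^2-2a(a-1)-4(a-1)+4a>0$.

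The main obstacle is organizing this case analysis cleanly: decide whether $c_1$ is large or small relative to $h_1\approx a$. If small, use the real-rootedness of $f$. If large, the inequality holds outright from the size of $c_1$ compared with $c_2<\binom{a+1}{2}$. I would double-check the boundary cases $a=2$ and small $b$ by hand.
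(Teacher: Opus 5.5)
Your proof is correct. It differs from the paper's mainly in how the two constraints are combined.

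The paper avoids any case split. Since $(c_1-1)^2$ is increasing for $c_1\ge1$, it replaces $c_1$ by its real lower bound $2\sqrt{N}$, which comes from real-rootedness of $f$. This gives a discriminant of $g$ at least $1+4(h_1-\sqrt N)$. The paper then closes with $h_1\ge\sqrt N$, derived from $N\le\binom{h_1+1}{2}\le h_1^2$.

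You instead stay with integers and split at $c_1=2h_1$. For $c_1\le 2h_1$, the discriminant inequality of $f$ alone suffices, with slack $4h_1+1-2c_1\ge1$, and no relation between $h_1$ and $c_2$ is needed. For $c_1\ge 2h_1+1$, real-rootedness of $f$ is not used at all; only the size of $c_1$ against the upper bound on $c_2$ matters. Your version shows which hypothesis carries each regime, at the cost of a case analysis. The paper's version is a single chain of inequalities.

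Your second case can be shortened by not splitting on $b$. From $c_1\ge 2h_1+1$ you get $(c_1-1)^2\ge 4h_1^2$. Then $4h_1^2\ge 4c_2-4h_1$ follows from $c_2\le\binom{h_1+1}{2}$, which holds for both $b=0$ and $b>0$. This is exactly the inequality the paper extracts from the expansion.

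The ``$\le$'' in your displayed formula for $g_2$ should be an equality; this is harmless.
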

\begin{proof}
If $f(t)=1+c_1t$, its decomposition is
\[
 g(t)=1+(c_1-1)t,\qquad h(t)=1.
\]
The assertion follows, including the case $c_1=1$, in which $g(t)$ is constant.

Now let $f(t)=1+c_1t+Nt^2$, where $c_1,N$ are positive integers.
Real-rootedness is equivalent to $c_1^2\ge4N$. Write the standard
quadratic expansion as
\[
 N=\binom{a}{2}+b,\qquad 0\le b<a,
\]
and recall that $h_1$ is the coefficient of $t$ in $h(t)$. If $b=0$, then $h_1=a-1$ and $N=\binom{h_1+1}{2}$. If $b>0$, then $h_1=a$, and the integrality of $b<a=h_1$ gives
\[ N=\binom{h_1}{2}+b
 \le\binom{h_1}{2}+h_1-1=\binom{h_1+1}{2}-1.
\]
Thus in both cases
\[ N\le\binom{h_1+1}{2}=\frac{h_1(h_1+1)}{2}\le h_1^2,
\]
where $h_1\ge1$ justifies the final inequality. In particular,
$h_1\ge\sqrt N$.

The recursive decomposition is
\[ g(t)=1+(c_1-1)t+(N-h_1)t^2,\qquad h(t)=1+h_1t.
\]
The coefficient $N-h_1$ is nonnegative by the definition of the
decomposition. If it vanishes, $g(t)$ is linear because
$c_1\ge2\sqrt N\ge2$, and hence it is real-rooted.
If $N-h_1>0$, the discriminant of $g(t)$ satisfies
\begin{align*}
 (c_1-1)^2-4(N-h_1)\ge(2\sqrt N-1)^2-4(N-h_1)=1+4(h_1-\sqrt N)\ge1.
\end{align*}
The first inequality follows from
$c_1-1\ge2\sqrt N-1\ge1$ by squaring nonnegative quantities.
Thus $g(t)$ is real-rooted in either case. The polynomial $h(t)$ is linear,
so it is also real-rooted.
\end{proof}

\begin{example}
A small counterexample for \cref{Question-MU-Welker} is
\[f(t)=(1+3t)^3=1+9t+27t^2+27t^3.
\]
The relevant standard binomial expansions are
\[
 9=\binom91,\qquad
 27=\binom72+\binom61,\qquad
 27=\binom63+\binom42+\binom11.
\]
For the quadratic expansion, $\binom72=21\le27<28=\binom82$, and
the remainder is $6<7$. For the cubic expansion,
$\binom63=20\le27<35=\binom73$; the remainder is $7$, whose
quadratic expansion is $\binom42+\binom11$. Thus the displayed
expansions are indeed the prescribed ones.
Applying \eqref{eq:recursive-coefficients} gives
\[
 g(t)=1+8t+20t^2+13t^3=(1+t)(1+7t+13t^2),
 \qquad h(t)=1+7t+14t^2.
\]
The discriminants of the two quadratic polynomials are respectively
$49-52=-3$ and $49-56=-7$. Hence both output polynomials have nonreal
zeros. This example also shows that the sufficient bounds in \cref{thm:main} need not be close to the smallest admissible
parameter in a fixed degree.
\end{example}

\begin{proposition}
For every pair of positive integers $d,m$, the polynomial $(1+mt)^d$
is the $f$-polynomial of a simplicial complex of dimension $d-1$.
\end{proposition}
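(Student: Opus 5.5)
The plan is to realize $(1+mt)^d$ by the construction sketched in the introduction. Take a vertex set $V=V_1\sqcup\cdots\sqcup V_d$ with $|V_k|=m$ for each $k$. Define $\Delta_{d,m}$ to be the collection of all subsets $\sigma\subseteq V$ with $|\sigma\cap V_k|\le1$ for every $k$. In other words, $\Delta_{d,m}$ is the join of $d$ copies of the zero-dimensional complex consisting of $m$ points, together with the empty face.

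First I will check that $\Delta_{d,m}$ is a simplicial complex. It is nonempty, since $\emptyset\in\Delta_{d,m}$. It is closed under taking subsets, because passing to a subset can only decrease each intersection size $|\sigma\cap V_k|$.

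Next I will count the faces with $i$ vertices. Such a face is determined by two choices:
\begin{enumerate}[(i)]
\item a set $S\subseteq\{1,\ldots,d\}$ of $i$ blocks that the face meets, which can be made in $\binom{d}{i}$ ways;
\item one vertex from each block $V_k$ with $k\in S$, which can be made in $m^i$ ways.
\end{enumerate}
Distinct choices give distinct faces, since $S$ is recovered as the set of blocks that $\sigma$ meets. Hence $f_{i-1}^{\Delta}=\binom{d}{i}m^i$ for $0\le i\le d$, with the empty face accounting for the term $i=0$. No face has more than $d$ vertices. Therefore
\[
f^{\Delta_{d,m}}(t)=\sum_{i=0}^{d}\binom{d}{i}m^it^i=(1+mt)^d
\]
by the binomial theorem.

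Finally, for the dimension: when $m\ge1$, the faces of maximal size have exactly $d$ vertices, one from each block, and they exist. So $\dim\Delta_{d,m}=d-1$.

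There is no real obstacle here. The only point needing care is the bijection in the counting step, which is immediate from the block structure. As an alternative, one could note that the $f$-polynomial of a join is the product of the $f$-polynomials of its factors, and that $m$ isolated points have $f$-polynomial $1+mt$.
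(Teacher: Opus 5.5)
Your proof is correct and follows the paper's argument essentially line by line: the same complex on $d$ disjoint blocks of size $m$, the same count $\binom{d}{i}m^i$ of $i$-vertex faces (choose the blocks met, then one vertex in each), and the same observation that a face with one vertex from each block gives dimension $d-1$. Your closing remark, that the $f$-polynomial of a join is the product of the $f$-polynomials of its factors, is a valid alternative viewpoint that the paper does not use.
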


\begin{proof}
Let $V_1,\ldots,V_d$ be pairwise disjoint sets with $|V_j|=m$ for every
$j$, and let
\[
 \Delta=\left\{\sigma\subseteq V_1\cup\cdots\cup V_d:
                 |\sigma\cap V_j|\le1\text{ for }1\le j\le d\right\}.
\]
The empty set belongs to $\Delta$. Every subset of a member of $\Delta$
also meets each $V_j$ in at most one element, so $\Delta$ is a simplicial
complex. A face has at most $d$ vertices, and a face with $d$ vertices
exists by choosing one vertex from each nonempty set $V_j$. Thus
$\Delta$ has dimension $d-1$.

To choose a face with $i$ vertices, first choose the $i$ sets among
$V_1,\ldots,V_d$ that it meets, and then choose one vertex from each of
these sets. This gives exactly $\binom{d}{i}m^i$ faces. The count also
holds for $i=0$, where it gives the unique empty face. Hence
\[f^{\Delta}(t)=\sum_{i=0}^{d}\binom{d}{i}m^it^i=(1+mt)^d.
\]
This completes the proof.
\end{proof}

The coefficient inequalities in \cite[Conjecture~3.8]{MW} also hold for
this family, as follows from the colex construction in the
Kruskal--Katona theorem.

\begin{proposition}
For all positive integers $d,m$, the recursive decomposition
\[(1+mt)^d=g(t)+th(t)
\]
satisfies $h_i\le g_i$ for $0\le i\le d-1$.
\end{proposition}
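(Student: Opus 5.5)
The plan is to realize $g(t)$ and $th(t)$ as the deletion and the star of one vertex in the \emph{compressed} (colex) simplicial complex with $f$-polynomial $(1+mt)^d$. Once that is done, the inequality $h_i\le g_i$ reduces to the trivial fact that a link is contained in the corresponding deletion. Nothing specific to $(1+mt)^d$ is used except that it is an $f$-polynomial, which the preceding proposition provides.

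\emph{Step 1 (the compressed complex).} Set $c_i=\binom{d}{i}m^i$. The previous proposition shows that $(1+mt)^d$ is an $f$-polynomial, so by the Kruskal--Katona theorem the collection $\Delta_C$ is a simplicial complex with $f^{\Delta_C}(t)=(1+mt)^d$. Here $\Delta_C$ consists of the empty set together with, for each $i$, the first $c_i$ subsets of size $i$ of $\{1,2,\ldots\}$ in colex order. I will use the standard description of this initial segment. If $c_i=\binom{a_i}{i}+\cdots+\binom{a_j}{j}$ is the standard expansion, the segment is the disjoint union over $r=j,\ldots,i$ of the blocks
\[
\bigl\{S\cup\{a_{i}+1,a_{i-1}+1,\ldots,a_{r+1}+1\}: S\subseteq[a_r],\ |S|=r\bigr\}.
\]
This follows by induction on $i-j$ from the definition of colex order and the strict decrease $a_i>a_{i-1}>\cdots$.

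\emph{Step 2 (identifying $g$ and $h$).} Split each block according to whether $S$ contains the vertex $1$. Note that $1\notin\{a_k+1\}$, since $a_k\ge k\ge1$. Inside block $r$, the sets not containing $1$ number $\binom{a_r-1}{r}$, and those containing $1$ number $\binom{a_r-1}{r-1}$. Summing over $r$ and comparing with \eqref{eq:recursive-coefficients} gives two identities. First, $g_i$ is the number of $i$-element faces of $\Delta_C$ not containing $1$, that is, faces of $\operatorname{del}_{\Delta_C}(1)$. Second, $h_{i-1}$ is the number of $i$-element faces containing $1$, which equals the number of $(i-1)$-element faces of $\operatorname{lk}_{\Delta_C}(1)$. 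Hence $g(t)=f^{\operatorname{del}(1)}(t)$ and $h(t)=f^{\operatorname{lk}(1)}(t)$. This is consistent with $g_0=h_0=1$ and with $g_1=c_1-1$.

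\emph{Step 3 (the inequality).} Take a face $F\in\operatorname{lk}_{\Delta_C}(1)$ with $|F|=i$. Then $F\cup\{1\}\in\Delta_C$ and $1\notin F$. Closure under subsets gives $F\in\Delta_C$, so $F$ is an $i$-element face of $\operatorname{del}(1)$. The inclusion $\operatorname{lk}(1)\subseteq\operatorname{del}(1)$ therefore gives $h_i\le g_i$ for every $0\le i\le d-1$. The main obstacle I expect is Step 1, namely writing down the block decomposition of the colex initial segment cleanly and checking that it matches the \emph{standard} expansion, including the last block and the case where the expansion ends at $j=i$. I would handle it by induction on the number of terms of the expansion, peeling off $\binom{a_i}{i}$, i.e.\ all $i$-subsets of $[a_i]$. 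The remaining $c_i-\binom{a_i}{i}<\binom{a_i}{i-1}$ sets are then $\{a_i+1\}$ joined with the initial colex segment of $(i-1)$-sets of that size.
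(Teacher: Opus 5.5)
Your proposal is correct and is essentially the paper's proof. Both pass to the colex-compressed complex given by Kruskal--Katona, decompose each colex initial segment into blocks indexed by the standard binomial expansion, and split each block according to whether vertex $1$ is present to identify $g_i$ and $h_{i-1}$. Both then conclude via the injection $\sigma\mapsto\sigma\setminus\{1\}$, which is exactly your inclusion of the link of $1$ in its deletion. The only difference is that you sketch the induction behind the block decomposition, which the paper states without proof.
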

\begin{proof}
For finite sets $A,B$, write $A\triangle B=(A\setminus B)\cup(B\setminus A)$ for their symmetric difference. The \emph{colexicographic order}, or \emph{colex order}, on subsets of the positive integers of a fixed
cardinality is defined, for $A\ne B$, by
\[A<_{\mathrm{colex}}B
 \quad\Longleftrightarrow\quad
 \max(A\triangle B)\in B.
\]

By the preceding proposition, the numbers
\[
 c_i=\binom di m^i,\qquad 0\le i\le d,
\]
are the face numbers of a simplicial complex. The colex form of the
Kruskal--Katona theorem \cite{Katona,Kruskal} therefore gives a
simplicial complex $\Gamma$ whose $i$-element faces are the first $c_i$
$i$-element subsets in colex order, for $1\le i\le d$.
Indeed, among all families of $c_i$ sets of cardinality $i$, the colex
initial segment minimizes the number of their $(i-1)$-element subsets,
and these subsets themselves form a colex initial segment.
Since the numbers $c_i$ are realized by a simplicial complex, this
number is at most $c_{i-1}$. Thus the chosen initial segments, together
with the empty set, are closed under taking subsets.

Fix $1\le i\le d$, and write the standard binomial expansion
\[
 c_i=\sum_{r=j}^{i}\binom{a_r}{r},
 \qquad a_i>a_{i-1}>\cdots>a_j\ge j\ge1.
\]
Writing $[a]=\{1,\ldots,a\}$, the corresponding colex initial segment
is the disjoint union of the blocks
\[\left\{\{a_i+1,\ldots,a_{r+1}+1\}\cup B:B\subseteq[a_r],\ |B|=r\right\},\qquad j\le r\le i,
\]
where the fixed set is empty when $r=i$. All vertices in the fixed
set exceed $a_r$, so that set does not contain $1$. The numbers of
faces in the $r$-th block avoiding and containing $1$ are therefore
$\binom{a_r-1}{r}$ and $\binom{a_r-1}{r-1}$, respectively. Summing over
the blocks and using \eqref{eq:recursive-coefficients} gives
\[
 \begin{aligned}
 g_i&=\#\{\sigma\in\Gamma:|\sigma|=i,\ 1\notin\sigma\},\\
 h_{i-1}&=\#\{\sigma\in\Gamma:|\sigma|=i,\ 1\in\sigma\}.
 \end{aligned}
\]
For $1\le i\le d-1$, deleting $1$ from an $(i+1)$-element face
containing $1$ gives an $i$-element face avoiding $1$, since $\Gamma$
is closed under taking subsets. This map is injective, proving
$h_i\le g_i$. Finally, $h_0=g_0=1$.
\end{proof}

Thus the examples disprove the real-rootedness assertion in \cite[Question~3.10]{MW} while satisfying the coefficient inequalities
in \cite[Conjecture~3.8]{MW}. Their input polynomials also satisfy the conclusion sought in \cite[Question~1.1]{BS}. In particular, the inequalities $h_i\le g_i$ do not ensure real-rootedness of either polynomial in the recursive decomposition.






\noindent
{\small \textbf{Acknowledgments:}}
The authors would like to express sincere gratitude for all the suggestions that have improved the presentation of this paper.
Feihu Liu was partially supported by the Postdoctoral Fellowship Program and China Postdoctoral Science Foundation (Grant No. BX2026002).
Ying Wang was partially supported by the Natural Science Foundation of Henan Province (Grant No. 262300422649).

\noindent{\small \textbf{Declaration of AI Assistance:}}
During the preparation of this manuscript, the authors used ChatGPT to assist in exploring possible approaches, checking technical details, and improving the exposition. All mathematical arguments, computations, proofs, and results were independently verified by the authors. The authors take full responsibility for the content of this manuscript.



\end{document}